\newtheorem{theorem}{Theorem}
\newtheorem{corollary}{Corollary}
\newtheorem{remark}{Remark}
\newenvironment{example}[1]{\noindent{\bf Example #1}.}{\space\hfill\qed\vskip2mm}
\def\Det{\mathop{\rm Det}}
\def\sign{\mathop{\rm sign}}
\begin{document}

\title{The equi-affine curvatures of curves in 3-dimensional pseudo-Riemannian manifolds}

\author{Karina Olszak and Zbigniew Olszak}

\dedicatory{Dedicated to Professor Udo Simon on his 81st birthday}

\date{\today}

\subjclass[2010]{Primary 53C42; Secondary 53B05, 53B30, 53C50}

\keywords{Equi-affine manifold, pseudo-Riemannian manifold, Lorentzian manifold, equi-affine curvatures of a curve, Frenet curvatures of a curve}

\begin{abstract}
In this paper, the Cartan frames and the equi-affine curvatures are described with the help of the Frenet frames and the Frenet curvatures of a non-null and non-degenerate curve in a 3-dimensional pseudo-Riemannian manifold. The constancy of the Frenet curvatures of such a curve always implies the constancy of the equi-affine curvatures. We show that the converse statement does not hold in general. Finally, we study the equi-affine curvatures of null curves in 3-dimensional Lorentzian manifolds, and prove that they are related to their pseudo-torsion.
\end{abstract}

\maketitle

\section{Introduction}

The studies of the curvatures of curves in equi-affine geometry have already the long history. The beginings of it we can find in the classical books about curves in affine spaces of dimension 2 and 3; see W. Blaschke \cite{Bla}, H. W. Guggenheimer \cite{Gug}, B. Su \cite{Su}, cf. also J. Favard \cite{Fav}, K. Nomizu and T. Sasaki \cite{NS}. Next, such curves were investigated also in higher dimensional affine spaces by, among others, D. Davis \cite{Dav}, S. Izumiya and T. Sano \cite{IS}, D. Khadjiev and \"O. Peksen \cite{KP}, S. Kobayashi and T. Sasaki \cite{KS}, M. Nadjafikhah and A. Mahdipour-Shirayeh \cite{NMS-1,NMS-2}, L. A. Santal\'o \cite{San}.

On the other hand, these ideas were extended to the non-degenerate curves in equi-affine manifolds in the papers by W. Barthel and A. Irmingard \cite{Bar}, M. Faghfouri and M. Toomanian \cite{FT}, V. Hlavat\'y \cite{Hla} and P. Stavre \cite{Sta}. 

The present paper is a continuation of the authors works on the equi-affine curvature of curves in pseudo-Riemannian (including Riemannian) manifolds. Namely, in the paper \cite{OOl}, we have expressed the equi-affine curvature with the help of the Frenet (geodesic) curvature of a non-degenerate curve in the case when the pseudo-Riemannian manifolds are 2-dimensional. Some motivations for study of equi-affine curvature of curves are also mentioned in \cite{OOl}.

In the presented paper, as the main achievement, we describe the Cartan frames and the equi-affine curvatures with the help of the Frenet frames and the Frenet curvatures of a non-null and non-degenerate curve in a 3-dimensional pseudo-Riemannian manifold. As it can be seen, the constancy of the Frenet curvatures of such a curve always implies the constancy of the equi-affine curvatures. We will show that the converse statement does not hold in general. Finally, we study the equi-affine curvatures for null curves in 3-dimensional Lorentzian manifolds, and we show that they are related to their pseudo-torsion.

\section{Curves in 3-dimensional equi-affine manifolds}

In this section, following the classical books and papers mentioned in the introduction, we present the notion of the equi-affine curvature of a curve in a 3-dimensional equiaffine manifold.

Let $(M^3,\nabla,\varOmega)$ be a $3$-dimensional equi-affine manifold, that is, $M^3$ is an oriented $3$-dimensional differentiable manifold endowed with an affine connection $\nabla$ and a volume element (a nowhere vanishing $3$-form) $\varOmega$ such that $\nabla\varOmega=0$. If it is not confused, such a manifold will be denoted by $M^3$.

Let $\alpha\colon I\to M^3$, $I$ being an open interval, be a non-degenerate curve in $M^3$, that is, the mapping is smooth and 
$
  \varOmega(\alpha', \nabla_{\alpha'}\alpha', \nabla^2_{\alpha'}\alpha') \neq 0
$
at every point of the curve. Thus, the vector fields $\alpha'$, $\nabla_{\alpha'}\alpha'$, $\nabla^2_{\alpha'}\alpha'$ are linearly independent at every point of the curve $\alpha$. The number 
\begin{align*}
  \varepsilon = \sign(\varOmega(\alpha',
            \nabla_{\alpha'}\alpha',
            \nabla^2_{\alpha'}\alpha')) (\, = \pm1) 
\end{align*}
will be called the orientation of $\alpha$. In the case when $M^3$ is the standard affine space $\mathbb A^3$, a curve $\alpha$ is called dextrorse (right winding) if $\varepsilon=+1$, and sinistrorse (left winding) if $\varepsilon=-1$. For the further use, we assume recursively that
$
  \nabla^{k+1}_{\alpha'}\alpha'
  = \nabla_{\alpha'}\big(\nabla^k_{\alpha'}
    \alpha'\big),
$
for any $k\geqslant1$ and $\nabla^1_{\alpha'}\alpha' = \nabla_{\alpha'}\alpha'$. 

Define the equi-affine speed function $\mu$ and an auxiliary function $\varphi$ on $I$ by  
\begin{equation}
\label{mu}
  \mu = |\varOmega(\alpha',
            \nabla_{\alpha'}\alpha',
            \nabla^2_{\alpha'}\alpha')|^{1/6}, \quad \varphi = 1/\mu. 
\end{equation}
Then, the equi-affine arc length parameter $\sigma$ starting at $t_0\in I$ is defined with the help of $\mu$ as 
\begin{align*}
  \sigma(t) = \int^t_{t_0} \mu(u)\,du, \ t \in I.
\end{align*}
However, in the sequel, we still assume that the parametrization is arbitrary. 

Define the Cartan moving frame $(\mathbf e_1,\mathbf e_2,\mathbf e_3)$ along the curve $\alpha$ by 
\begin{align}
\label{defCartfr}
  \mathbf e_1 = \varphi \alpha', \quad 
	\mathbf e_2 = \nabla_{\mathbf e_1}\mathbf e_1, \quad
	\mathbf e_3 = \nabla_{\mathbf e_1}\mathbf e_2
\end{align}

Note that then, we have 
\begin{align}
  \mathbf e_1 
    &= \varphi \alpha', \nonumber\\
\label{e1e2e3}
  \mathbf e_2 
    &= \varphi \nabla_{\alpha'}\mathbf e_1 
     = \varphi \varphi' \alpha' + \varphi^2 \nabla_{\alpha'}\alpha', \\
  \mathbf e_3 
    &= \varphi \nabla_{\alpha'}\mathbf e_2
     = (\varphi \varphi'^2 + \varphi^2 \varphi'') \alpha' 
       + 3\varphi^2 \varphi' \nabla_{\alpha'}\alpha'
       + \varphi^3 \nabla^2_{\alpha'}{\alpha'}. \nonumber
\end{align}

Having (\ref{e1e2e3}), we find 
\begin{align*}
  \varOmega(\mathbf e_1, \mathbf e_2, \mathbf e_3) 
	= \varphi^6 \varOmega(\alpha', \nabla_{\alpha'}\alpha', \nabla^2_{\alpha'}\alpha')
	= \varepsilon. 
\end{align*}
Hence, it follows that
\begin{align*}
  \varOmega(\mathbf e_1, \mathbf e_2, \nabla_{\mathbf e_1}\mathbf e_3) = 0.
\end{align*}
Therefore, there are two functions $\varkappa_1$ and $\varkappa_2$ on $I$ such that
\begin{align}
\label{defvarkap}
  \nabla_{\mathbf e_1}\mathbf e_3 = \varkappa_1 \mathbf e_1 + \varkappa_2 \mathbf e_2.
\end{align}
The functions $\varkappa_1$, $\varkappa_2$ are called the first and the second equi-affine curvatures of the curve $\alpha$. They can be described as it follows 
\begin{align}
\label{kp1}
  \varkappa_1 
	&= \varepsilon \varOmega(\nabla_{\mathtt e_1}\mathtt e_3, \mathtt e_2, \mathtt e_3) 
	 = \varepsilon \varphi \varOmega(\nabla_{\alpha'}\mathtt e_3, \mathtt e_2, \mathtt e_3), \\[+3pt]
\label{kp2}
  \varkappa_2 
	&= \varepsilon \varOmega(\mathtt e_1, \nabla_{\mathtt e_1}\mathtt e_3, \mathtt e_3)
	 = \varepsilon \varphi \varOmega(\mathtt e_1, \nabla_{\alpha'}\mathtt e_3, \mathtt e_3).
\end{align}

\begin{theorem}
The equi-affine curvatures of a non-degenerate curve in a 3-dimensional equi-affine manifold can be expressed in the following way
\begin{align}
\label{kap1}
\varkappa_1 =\ 
	&\varphi^2 \varphi''' - 6 \varphi'^3
	        + \varepsilon \varphi^8 \varphi' 
	        \varOmega\big(\alpha', \nabla^2_{\alpha'}\alpha', 
		                    \nabla^3_{\alpha'}\alpha'\big) 
					+ \varepsilon \varphi^9
	        \varOmega\big(\nabla_{\alpha'}\alpha', 
	                       \nabla^2_{\alpha'}\alpha', 
		                     \nabla^3_{\alpha'}\alpha'\big), \\[+3pt]
\label{kap2} 
  \varkappa_2 =\ 
	&4 \varphi \varphi'' + 7 \varphi'^2 
	 - \varepsilon \varphi^8 
		 \varOmega\big(\alpha', \nabla^2_{\alpha'}\alpha', 
		               \nabla^3_{\alpha'}\alpha'\big).
\end{align}
\end{theorem}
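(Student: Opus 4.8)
The plan is to compute $\nabla_{\mathbf e_1}\mathbf e_3$ explicitly in the $(\alpha', \nabla_{\alpha'}\alpha', \nabla^2_{\alpha'}\alpha', \nabla^3_{\alpha'}\alpha')$ expansion, and then extract $\varkappa_1, \varkappa_2$ from the defining relation $\nabla_{\mathbf e_1}\mathbf e_3 = \varkappa_1 \mathbf e_1 + \varkappa_2 \mathbf e_2$ together with $\varOmega(\mathbf e_1, \mathbf e_2, \mathbf e_3) = \varepsilon$. First I would differentiate the third line of \eqref{e1e2e3}. Since $\nabla_{\mathbf e_1} = \varphi \nabla_{\alpha'}$, applying $\varphi\nabla_{\alpha'}$ to
$
  \mathbf e_3 = (\varphi\varphi'^2 + \varphi^2\varphi'')\alpha' + 3\varphi^2\varphi'\nabla_{\alpha'}\alpha' + \varphi^3\nabla^2_{\alpha'}\alpha'
$
and using $\nabla_{\alpha'}(\nabla^k_{\alpha'}\alpha') = \nabla^{k+1}_{\alpha'}\alpha'$, we obtain
\begin{align*}
  \nabla_{\mathbf e_1}\mathbf e_3
    = A\,\alpha' + B\,\nabla_{\alpha'}\alpha' + C\,\nabla^2_{\alpha'}\alpha' + \varphi^4\,\nabla^3_{\alpha'}\alpha',
\end{align*}
where $A = \varphi(\varphi\varphi'^2+\varphi^2\varphi'')'$, $B = \varphi(\varphi\varphi'^2+\varphi^2\varphi'') + \varphi(3\varphi^2\varphi')'$, and $C = 3\varphi^3\varphi' + \varphi(\varphi^3)' = 6\varphi^3\varphi'$. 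These are routine derivative computations, so I will not grind through them here; the key structural point is the coefficient $\varphi^4$ of the top-order term $\nabla^3_{\alpha'}\alpha'$.

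Next I would use \eqref{kp1} and \eqref{kp2}, i.e.\ $\varkappa_1 = \varepsilon\varphi\,\varOmega(\nabla_{\alpha'}\mathbf e_3, \mathbf e_2, \mathbf e_3)$ and $\varkappa_2 = \varepsilon\varphi\,\varOmega(\mathbf e_1, \nabla_{\alpha'}\mathbf e_3, \mathbf e_3)$, substituting the expansion of $\nabla_{\alpha'}\mathbf e_3 = \varphi^{-1}\nabla_{\mathbf e_1}\mathbf e_3$ above together with \eqref{e1e2e3}. For $\varkappa_2$: in $\varOmega(\mathbf e_1, \nabla_{\alpha'}\mathbf e_3, \mathbf e_3)$ only the $\nabla^2_{\alpha'}\alpha'$ and $\nabla^3_{\alpha'}\alpha'$ parts of $\nabla_{\alpha'}\mathbf e_3$ can contribute a nonzero $\varOmega$-value against $\mathbf e_1 = \varphi\alpha'$ and $\mathbf e_3$ (whose $\nabla^2_{\alpha'}\alpha'$-coefficient is $\varphi^3$), because $\varOmega(\alpha', \nabla_{\alpha'}\alpha', \nabla_{\alpha'}\alpha')=0$ and $\varOmega(\alpha', \alpha', \cdot)=0$. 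The $\nabla^3_{\alpha'}\alpha'$ term pairs with the $\nabla_{\alpha'}\alpha'$-part of $\mathbf e_3$, giving the term $-\varepsilon\varphi^8\,\varOmega(\alpha', \nabla^2_{\alpha'}\alpha', \nabla^3_{\alpha'}\alpha')$ (the sign coming from reordering the $\varOmega$-arguments), and the $\nabla^2_{\alpha'}\alpha'$ term of $\nabla_{\alpha'}\mathbf e_3$ pairs with the top $\nabla^2_{\alpha'}\alpha'$-part of $\mathbf e_3$ — wait, those coincide, so instead it pairs with the $\nabla_{\alpha'}\alpha'$ and $\alpha'$ parts, and after collecting one uses $\varOmega(\alpha', \nabla_{\alpha'}\alpha', \nabla^2_{\alpha'}\alpha') = \varepsilon\varphi^{-6}$ to convert everything into the stated polynomial $4\varphi\varphi'' + 7\varphi'^2$. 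A completely analogous but longer bookkeeping for $\varkappa_1$, now keeping the $\varOmega(\nabla_{\alpha'}\alpha', \nabla^2_{\alpha'}\alpha', \nabla^3_{\alpha'}\alpha')$ term as well, yields \eqref{kap1}.

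The main obstacle I expect is purely the algebraic bookkeeping: keeping track of the many product-rule terms in $A, B$, expanding the two $4\times 4$-style $\varOmega$-determinants, and correctly handling the signs that arise both from permuting $\varOmega$'s arguments into a canonical order and from the substitution $\varOmega(\alpha', \nabla_{\alpha'}\alpha', \nabla^2_{\alpha'}\alpha') = \varepsilon\varphi^{-6}$ (note $\varepsilon^2 = 1$ is used repeatedly). There is no conceptual difficulty — multilinearity and antisymmetry of $\varOmega$ do all the work, and the fact that $\alpha', \nabla_{\alpha'}\alpha', \nabla^2_{\alpha'}\alpha'$ form a frame means every $\varOmega$-value of three vectors among $\{\alpha', \nabla_{\alpha'}\alpha', \nabla^2_{\alpha'}\alpha', \nabla^3_{\alpha'}\alpha'\}$ reduces to a multiple of $\varOmega(\alpha', \nabla_{\alpha'}\alpha', \nabla^2_{\alpha'}\alpha')$ or $\varOmega(\alpha', \nabla_{\alpha'}\alpha', \nabla^3_{\alpha'}\alpha')$, $\varOmega(\alpha', \nabla^2_{\alpha'}\alpha', \nabla^3_{\alpha'}\alpha')$, $\varOmega(\nabla_{\alpha'}\alpha', \nabla^2_{\alpha'}\alpha', \nabla^3_{\alpha'}\alpha')$ — and only the last two survive in the final formulas. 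I would organize the computation by first recording $\nabla_{\alpha'}\mathbf e_3$ once and for all, then reading off each curvature by a short determinant expansion.
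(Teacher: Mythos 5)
Your overall strategy coincides with the paper's: differentiate the third line of \eqref{e1e2e3} to expand $\nabla_{\alpha'}\mathbf e_3$ in the frame $\big(\alpha',\nabla_{\alpha'}\alpha',\nabla^2_{\alpha'}\alpha',\nabla^3_{\alpha'}\alpha'\big)$, substitute into \eqref{kp1} and \eqref{kp2}, and reduce by multilinearity and antisymmetry; your coefficients $A$, $B$, $C$, $\varphi^4$ agree with the paper's (up to the overall factor $\varphi$ coming from your choice to record $\nabla_{\mathbf e_1}\mathbf e_3$ rather than $\nabla_{\alpha'}\mathbf e_3$). There is, however, a genuine gap: antisymmetry together with the single identity $\varOmega(\alpha',\nabla_{\alpha'}\alpha',\nabla^2_{\alpha'}\alpha')=\varepsilon\varphi^{-6}$ does \emph{not} suffice to reach \eqref{kap1} and \eqref{kap2}. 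The multilinear expansion produces terms proportional to $\varOmega(\alpha',\nabla_{\alpha'}\alpha',\nabla^3_{\alpha'}\alpha')$ with nonzero coefficients (namely $\varepsilon\varphi^7(2\varphi'^2-\varphi\varphi'')$ in $\varkappa_1$ and $-3\varepsilon\varphi^7\varphi'$ in $\varkappa_2$), and no such term appears in the target formulas. To eliminate them you need the further identity
\begin{align*}
\varOmega\big(\alpha',\nabla_{\alpha'}\alpha',\nabla^3_{\alpha'}\alpha'\big)=-6\,\varepsilon\varphi^{-7}\varphi',
\end{align*}
obtained by differentiating $\varOmega(\alpha',\nabla_{\alpha'}\alpha',\nabla^2_{\alpha'}\alpha')=\varepsilon\varphi^{-6}$ along the curve using $\nabla\varOmega=0$ (the other two product-rule terms vanish by antisymmetry). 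This is not cosmetic bookkeeping: in $\varkappa_2$ it contributes $+18\varphi'^2$, which is exactly what converts the raw coefficient $-11\varphi'^2+4\varphi\varphi''$ into the stated $7\varphi'^2+4\varphi\varphi''$.

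A secondary slip: your claim that for $\varkappa_2$ only the $\nabla^2_{\alpha'}\alpha'$ and $\nabla^3_{\alpha'}\alpha'$ parts of $\nabla_{\alpha'}\mathbf e_3$ can contribute is false. The $\nabla_{\alpha'}\alpha'$-component of $\nabla_{\alpha'}\mathbf e_3$ pairs nontrivially with the $\nabla^2_{\alpha'}\alpha'$-part of $\mathbf e_3$ (the vanishing of $\varOmega(\alpha',\nabla_{\alpha'}\alpha',\nabla_{\alpha'}\alpha')$ only kills its pairing with the $\nabla_{\alpha'}\alpha'$-part), and this contribution is precisely the source of the $4\varphi\varphi''$ term. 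The full expansion you propose to carry out would catch this automatically, so it does not invalidate the plan, but the heuristic as stated would lead you to drop a needed term.
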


\begin{proof}
The covariant differentiating of the third equation of (\ref{e1e2e3}) along the curve gives
\begin{align*}
  \nabla_{\alpha'}{\mathtt e}_3
    &= \big(\varphi'^3 
            + 4 \varphi \varphi' \varphi'' 
            + \varphi^2 \varphi'''\big) \alpha' 
       + \big(7 \varphi \varphi'^2 + 4 \varphi^2 \varphi''\big) 
              \nabla_{\alpha'}\alpha' \\
  &\quad \ + 6 \varphi^2 \varphi' \nabla^2_{\alpha'}\alpha' 
     + \varphi^3 \nabla^3_{\alpha'}\alpha'. 
\end{align*} 
Using the last formula and (\ref{e1e2e3}) into (\ref{kp1}) and (\ref{kp2}), after certain long computations, we find 
\begin{align}
\label{vkap1}
  \varkappa_1 =\ 
	&\varepsilon \varphi^6 
	 \big(\null- 6 \varphi \varphi' \varphi'' 
	      + 6 \varphi'^3 + \varphi^2 \varphi'''\big) 
	 \varOmega\big(\alpha', \nabla_{\alpha'}\alpha', 
		             \nabla^2_{\alpha'}\alpha'\big) \\
	&\null+ \varepsilon \varphi^7 \big(2 \varphi'^2 - \varphi \varphi''\big) 
	 \varOmega\big(\alpha', \nabla_{\alpha'}\alpha', 
		             \nabla^3_{\alpha'}\alpha'\big) \nonumber\\
	&\null+ \varepsilon \varphi^8 \varphi' 
	 \varOmega\big(\alpha', \nabla^2_{\alpha'}\alpha', 
		             \nabla^3_{\alpha'}\alpha'\big) 
	 + \varepsilon \varphi^9
	 \varOmega\big(\nabla_{\alpha'}\alpha', 
	               \nabla^2_{\alpha'}\alpha', 
		             \nabla^3_{\alpha'}\alpha'\big), \nonumber\\[+3pt]
\label{vkap2}
  \varkappa_2 =\ 
	&\varepsilon \varphi^6 
	   \big(\null- 11 \varphi'^2 + 4 \varphi \varphi''\big) 
		 \varOmega\big(\alpha', \nabla_{\alpha'}\alpha', 
		               \nabla^2_{\alpha'}\alpha'\big) \\
	&\null- 3 \varepsilon \varphi^7 \varphi' 
	   \varOmega\big(\alpha', \nabla_{\alpha'}\alpha', 
		               \nabla^3_{\alpha'}\alpha'\big) 
		 - \varepsilon \varphi^8 
		 \varOmega\big(\alpha', \nabla^2_{\alpha'}\alpha', 
		               \nabla^3_{\alpha'}\alpha'\big). \nonumber
\end{align}
When we apply the formulas
\begin{align*}
  \varOmega\big(\alpha', \nabla_{\alpha'}\alpha', \nabla^2_{\alpha'}\alpha'\big) 
	 &= \varepsilon \varphi^{-6}, \\[+3pt]
  \varOmega\big(\alpha', \nabla_{\alpha'}\alpha', \nabla^3_{\alpha'}\alpha'\big) 
	 &= -6 \varepsilon \varphi^{-7} \varphi', 
\end{align*}
the equalities (\ref{vkap1}) and (\ref{vkap2}) turn into (\ref{kap1}) and (\ref{kap2}), completing the proof.
\end{proof}

\begin{remark}
For curves in the affine 3-dimensional space, some of the above formulas occur e.g. in \cite{Bla,Gug,Su}.
\end{remark}

\section{3-dimensional pseudo-Riemannian manifolds}

Let $(M^3,g)$ be a $3$-dimensional, connected, oriented pseudo-Riemannian manifold. 

By $\varOmega$, we denote the natural volume form generated by the pseudo-Riemannian metric $g$. We assume that $\varOmega$ is compatible with the orientation of the manifold $M^3$. Thus, in the local coordinates $(x^1,x^2,x^3)$ of any chart belonging to the oriented atlas, $\varOmega$ is given by
\begin{equation*}
  \varOmega = 3! \sqrt{|G|}\, dx^1 \wedge dx^2 \wedge dx^3, 
\end{equation*}
where $G=\Det[g_{ij}]$ and $g_{ij}=g(\partial/\partial x^i,\partial/\partial x^j)$ are the local components of the metric $g$. As the skew-symmetric $(0,3)$-tensor field, the form $\varOmega$ has local components 
\begin{equation*}
  \varOmega_{ijk} = \sqrt{|G|}\, \varepsilon_{ijk}, 
\end{equation*}
where $\varepsilon_{ijk}$ are the Levi-Civita symbols (that is, $\varepsilon_{ijk}$ equals the sign ($=\pm1$) of the permutation $(i,j,k)$, or 0 if the triple $(i,j,k)$ is not a permutation of $(1,2,3)$). In the sequel, $\omega$ denotes the sign of the determinant $G$. Thus, $\omega=+1$ if the signature of the metric $g$ is $(+++)$ or $(+--)$, and $\omega=-1$ if the signature of the metric $g$ is $(++-)$ or $(---)$.

For a triple of tangent vectors $u,v,w\in T_pM^3$, $p\in M^3$, it holds: 

(a) $\varOmega(u,v,w)\neq 0$ if and only if $u,v,w$ are linearly independent; 

(b) $\varOmega(u,v,w)>0$ if and only if the frame $(u,v,w)$ is positively oriented, and $\varOmega(u,v,w)<0$ if and only if the frame $(u,v,w)$ is negatively oriented; 

(c) $\varOmega(u,v,w)=\pm1$ when $u,v,w$ are orthonormal. 

As it is well-known, the volume form $\varOmega$ enables to define the vector cross product of vector fields on $M^3$ in the following way: if $X,Y\in\mathfrak X(M^3)$ ($\mathfrak X(M^3)$ is the Lie algebra of smooth vector fields on $M^3$), then the vector  cross product $X\times Y\in\mathfrak X(M^3)$ is defined uniquely by demanding that the following condition is fulfilled
\begin{align*}
  g(X\times Y,Z) = \varOmega(X,Y,Z) \ \text{ for any } \,Z\in\mathfrak X(M^3).
\end{align*}
When using local coordinates, the vector cross product is given by 
\begin{align*}
   X\times Y &= \sum_{a,b=1}^3 \varOmega\Big(X,Y,\frac{\partial}{\partial x^a}\Big)
			            g^{ab} \frac{\partial}{\partial x^b} \\
						 &= \sqrt{|G|} \sum_{i,j,a,b=1}^3 X^i Y^j \varepsilon_{ija}
			            g^{ab} \frac{\partial}{\partial x^b} \nonumber
\end{align*}
Denote by $\mathcal F(M^3)$ the space of smooth functions on the manifold $M^3$. The vector cross product is an $\mathcal F(M^3)$-bilinear mapping from $\mathfrak X(M^3) \times \mathfrak X(M^3)$ into $\mathfrak X(M^3)$, which has the following properties 
\begin{align*}
  & X \times Y = - Y \times X, \\
  & g(X \times Y,X)=0, \\
  & (X \times Y) \times Z = \omega (\null - g(Y,Z)X + g(X,Z)Y), \\
  & (X \times Y) \times Z + (Y \times Z) \times X + (Z \times X) \times Y =0, \\
  & g(X\times Y,X\times Y) = \omega (g(X,X) g(Y,Y) - (g(X,Y))^2), \\
  & g(X\times Y,Z\times W) = \omega (g(X,Z) g(Y,W) - g(X,W) g(Y,Z)), \\
  & (X \times Y) \times (Z \times W) = \omega (\varOmega(X,Y,W)Z - \varOmega(X,Y,Z)W) 
\end{align*}
for any $X,Y,Z,W\in\mathfrak X(M^3)$. The above properties of the vector cross product are used in the next sections.

Moreover, it can be also checked that 
\begin{align*}
  \varOmega(U_1,U_2,U_3) \varOmega(V_1,V_2,V_3)
	= \omega \Det([g(U_i,V_j)]_{1\leqslant i,j\leqslant3})
\end{align*}
for any $U_1,U_2,U_3,V_1,V_2,V_3\in\mathfrak X(M^3)$.

\section{Frenet curves in 3-dimensional pseudo-Riemannian manifolds}

In this section, we state the standard procedure defining the Frenet frame and curvatures of a curve in a 3-dimensional pseudo-Riemannian manifold.

Let $M^3$ be a $3$-dimensional oriented pseudo-Riemannian manifold. Let $\alpha\colon I\to M^3$ be a non-null curve, $I$ being an open interval. Thus, $g(\alpha',\alpha')\neq0$ at any point of $I$. We assume that the curve is arc length parametrized and suppose $\varepsilon_1 = g(\alpha',\alpha') (= \pm1)$. 

Then, $\mathbf T = \alpha'$ is the unit tangent vector field along $\alpha$, and  $g(\mathbf T,\mathbf T)=\varepsilon_1$. 

We consider only non-degenerate curves, and without loss of generality, we assume that $g(\nabla_{\mathbf T}\mathbf T,\nabla_{\mathbf T}\mathbf T) \neq 0$ at every point of $I$, and let 
\begin{align*}
  \varepsilon_2 = \sign(g(\nabla_{\mathbf T}\mathbf T,\nabla_{\mathbf T}\mathbf T)) (= \pm1). 
\end{align*}

Choose the principal unit normal vector field $\mathbf N$ and the positive function $\kappa$ such that 
\begin{equation}
\label{fren1}
  \nabla_{\mathbf T}\mathbf T = \varepsilon_2\kappa \mathbf N.
\end{equation}
Thus, $g(\mathbf T,\mathbf N) = 0$ and $g(\mathbf N,\mathbf N) = \varepsilon_2$. Next, define the binormal vector field $\mathbf B$ by $\mathbf B = \varepsilon_3 \mathbf T \times \mathbf N$, where $\varepsilon_3 = \omega \varepsilon_1 \varepsilon_2$. Note that then, we have $g(\mathbf T,\mathbf B) = g(\mathbf N,\mathbf B) = 0$, $g(\mathbf B,\mathbf B) = \varepsilon_3$ and 
$\mathbf T \times \mathbf N = \varepsilon_3 \mathbf B$, 
$\mathbf N \times \mathbf B = \varepsilon_1 \mathbf T$, 
$\mathbf B \times \mathbf T = \varepsilon_2 \mathbf N$, 
and the orthonormal frame $(\mathbf T, \mathbf N, \mathbf B)$ is positively oriented since $\varOmega(\mathbf T, \mathbf N, \mathbf B) = 1$. 

Next, one checks that it holds 
\begin{align}
\label{fren2}
  \nabla_{\mathbf T} \mathbf N 
  = \null- \varepsilon_1 \kappa \mathbf T + \varepsilon_3 \tau \mathbf B
\end{align} 
for a certain function $\tau$. Finally, it also holds 
\begin{align}
\label{fren3}
  \nabla_{\mathbf T} \mathbf B = \null- \varepsilon_2 \tau \mathbf N. 
\end{align} 

The triple of the orthonormal vector fields $(\mathbf T, \mathbf N, \mathbf B)$ is the Frenet frame, $\kappa$ is the curvature and $\tau$ is the torsion of the curve $\alpha$. The equations (\ref{fren1}) - (\ref{fren3}) are the Frenet equations of this curve. 

Below, we write formulas useful in the description of the Frenet frame, the curvature and torsion of an arc length parametrized curve

\begin{align*}
  \mathbf N 
  &= \varepsilon_3 
     \frac{(\alpha' \times \nabla_{\alpha'} \alpha') 
               \times \alpha'}
          {\|\alpha' \times \nabla_{\alpha'} \alpha'\|} 
   = \omega \varepsilon_3 
     \frac{g(\alpha',\alpha') \nabla_{\alpha'}\alpha'
           - g(\alpha', \nabla_{\alpha'}\alpha') 
           \alpha'}
          {\|\alpha' \times \nabla_{\alpha'} \alpha'\|}, \\[+3pt]
  \mathbf B 
	&= \omega \varepsilon_1 
     \frac{\alpha' \times \nabla_{\alpha'} \alpha'}
          {\|\alpha' \times \nabla_{\alpha'} \alpha'\|},\\[+3pt]
  \kappa 
	&= \|\alpha' \times \nabla_{\alpha'} \alpha'\|, \\[+3pt]
  \tau 
	&= \varepsilon_3 
     \frac{\varOmega\left(\alpha', \nabla_{\alpha'} \alpha', 
           \nabla^2_{\alpha'} \alpha'\right)}
 	 	 	 	 	{\|\alpha' \times \nabla_{\alpha'} \alpha'\|^2}. 
\end{align*}

\section{Equi-affine curvatures of Frenet curves in 3-dimensional pseudo-Riemannian manifolds}

Let $(M^3,g)$ be a $3$-dimensional, connected, oriented pseudo-Riemannian manifold. We will also treat this manifold as the equi-affine manifold $(M^3,\nabla,\varOmega)$ with $\nabla$ and $\varOmega$ being the Levi-Civita connection and the natural volume element related to the pseudo-Riemannian metric $g$.

Let $\alpha\colon I\to M^3$ be a Frenet curve parametrized by the arc length. Assume also that this curve is non-degenerate and suppose
\begin{align}
\label{vareps}
  \varepsilon 
	  = \sign\left(\varOmega\left(\alpha', \nabla_{\alpha'}\alpha', \nabla^2_{\alpha'}\alpha'\right)\right).
\end{align}

In this section, we are going to express the Cartan moving frame $(\mathbf e_1, \mathbf e_2, \mathbf e_3)$ with the help of the Frenet frame $(\mathbf T, \mathbf N, \mathbf B)$, and the equi-affine curvatures $\varkappa_1$, $\varkappa_2$ with the help of	 the Frenet curvature $\kappa$ and torsion $\tau$ of the curve $\alpha$.

At first, we give some consequences of the the Frenet equations, which will be used below. Namely, having $\alpha'=\mathbf T$ and the equations (\ref{fren1}) - (\ref{fren3}), we compute 
\begin{align*}
  \nabla_{\alpha'} \alpha' 
	  &=\nabla_{\mathbf T}\mathbf T
		 = \varepsilon_2 \kappa \mathbf N, \\[+3pt]
  \nabla^2_{\alpha'} \alpha'
    &= \nabla_{\alpha'} \nabla_{\alpha'}\alpha' 
		 = -\, \varepsilon_1 \varepsilon_2 \kappa^2 \mathbf T 
		   + \varepsilon_2 \kappa' \mathbf N 
			 + \varepsilon_2 \varepsilon_3 \kappa \tau \mathbf B.
\end{align*}
Next, since $\varOmega(\mathbf T,\mathbf N,\mathbf B)=1$, using the formulas, we find
\begin{align}
\label{omega-1}
  \varOmega\left(\alpha', \nabla_{\alpha'} \alpha', \nabla^2_{\alpha'} \alpha'\right) 
		 &= \varepsilon_3 \kappa^2 \tau.
\end{align}
From (\ref{vareps}) and (\ref{omega-1}), it follows that $\tau$ is non-zero at every point and $\varepsilon = \varepsilon_3 \sign \tau$. Therefore, $\sign \tau = \varepsilon \varepsilon_3$. Moreover, by applying (\ref{omega-1}) into (\ref{mu}), we have 
\begin{align}
\label{phi}
  \varphi = \kappa^{-1/3} |\tau|^{-1/6}.
\end{align}

\begin{theorem}
Let $\alpha$ be a non-null, non-degenerate and arc length parametrized curve in a 3-dimensional oriented pseudo-Riemannian manifold. Then, the Cartan frame $(\mathbf e_1, \mathbf e_2, \mathbf e_3)$ of $\alpha$ can be expressed with the help of the Frenet frame $(\mathbf T, \mathbf N, \mathbf B)$ as it follows
\begin{align}
\label{e1-riem}
  \mathbf e_1 & 
	   = \kappa^{-1/3} |\tau|^{-1/6} \mathbf T, \\[+3pt]
\label{e2-riem}
	\mathbf e_2 & 
	   = \null- 6^{-1} \kappa^{-5/3} \tau^{-1} |\tau|^{-1/3} 
		   \left(2\tau \kappa' + \kappa \tau'\right) \mathbf T 
			 + \varepsilon_2 \kappa^{1/3} |\tau|^{-1/3} \mathbf N, \\[+3pt]
\label{e3-riem}
	\mathbf e_3 &= 18^{-1} \kappa^{-3} \tau^{-2} |\tau|^{-1/2}\ \cdot \\[+3pt]
	            &\quad \cdot \left(\null- 18 \varepsilon_1 \varepsilon_2 \kappa^4 \tau^2 
								  + 10 \tau^2 \kappa'^2 + 4 \kappa \tau \kappa' \tau' 
									+ 4 \kappa ^2 \tau'^2 - 6 \kappa \tau^2 \kappa'' 
									- 3 \kappa^2 \tau \tau''\right) \mathbf T \nonumber \\[+3pt]
							& \quad- 2^{-1} \varepsilon_2 \tau^{-1} |\tau|^{-1/2} \tau' \mathbf N 
							  + \varepsilon_2 \varepsilon_3 \tau |\tau|^{-1/2} \mathbf B. \nonumber
\end{align}
Moreover, the equi-affine curvatures $\varkappa_1$, $\varkappa_2$ of $\alpha$ can be expressed with the help of the Frenet curvature $\kappa$ and torsion $\tau$ as it follows
\begin{align}
\label{kap1-riem}
  \varkappa_1 = &\,216^{-1} \kappa^{-4} \tau^{-3}|\tau|^{-1/2} 
	   \big(\null- 288 \varepsilon _1 \varepsilon _2 \kappa ^4 \tau ^3 \kappa '
		          - 72 \varepsilon _2 \varepsilon _3 \kappa ^2 \tau^5 \kappa' 
							- 320 \tau^3 \kappa'^3 \\[+3pt]
						&\null+ 180 \varepsilon_1 \varepsilon_2 \kappa^5 \tau^2 \tau' 
							- 36 \varepsilon_2 \varepsilon_3 \kappa^3 \tau^4 \tau' 
							- 120 \kappa \tau^2 \kappa'^2 \tau' 
							- 42 \kappa^2 \tau \kappa' \tau'^2 \nonumber \\[+3pt]
						&\null- 85 \kappa^3 \tau'^3 
							+ 360 \kappa \tau^3 \kappa' \kappa'' 
							+ 72 \kappa^2 \tau^2 \tau' \kappa'' 
							+ 36 \kappa^2 \tau^2 \kappa' \tau'' 
							+ 126 \kappa^3 \tau \tau' \tau'' \nonumber \\[+3pt]
						&\null- 72 \kappa^2 \tau^3 \kappa''' 
							- 36 \kappa^3 \tau^2 \tau'''\big), \nonumber \\[+3pt]
\label{kap2-riem}
	\varkappa_2 = &\,36^{-1} \kappa^{-8/3} \tau^{-2} |\tau|^{-1/3}  
	            \big(\null- 36 \varepsilon_1 \varepsilon_2 \kappa^4 \tau^2  
							- 36 \varepsilon_2 \varepsilon_3  \kappa^2 \tau^4 \\[+3pt]
						 &\null + 20 \tau^2 \kappa'^2 
							 + 8 \kappa \tau \kappa' \tau' 
						   + 35 \kappa ^2 \tau'^2 
							 - 12 \kappa \tau^2 \kappa'' 
							 - 24 \kappa^2 \tau \tau''\big). \nonumber
\end{align}
\end{theorem}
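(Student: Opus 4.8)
The plan is to feed the Frenet data of $\alpha$ into the formulas already derived. Since $\alpha$ is arc length parametrized we have $\alpha'=\mathbf T$, and the Frenet equations (\ref{fren1})--(\ref{fren3}) give, as recorded just before the statement,
\[
  \nabla_{\alpha'}\alpha' = \varepsilon_2\kappa\,\mathbf N,\qquad
  \nabla^2_{\alpha'}\alpha' = -\varepsilon_1\varepsilon_2\kappa^2\,\mathbf T
     + \varepsilon_2\kappa'\,\mathbf N + \varepsilon_2\varepsilon_3\kappa\tau\,\mathbf B,
\]
together with $\varphi = \kappa^{-1/3}|\tau|^{-1/6}$ from (\ref{phi}) and $\varepsilon=\varepsilon_3\sign\tau$. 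First I would differentiate $\nabla^2_{\alpha'}\alpha'$ once more along $\alpha$, again invoking (\ref{fren1})--(\ref{fren3}), to obtain
\[
  \nabla^3_{\alpha'}\alpha' = -3\varepsilon_1\varepsilon_2\kappa\kappa'\,\mathbf T
     + \bigl(-\varepsilon_1\kappa^3 + \varepsilon_2\kappa'' - \varepsilon_3\kappa\tau^2\bigr)\,\mathbf N
     + \varepsilon_2\varepsilon_3\bigl(2\kappa'\tau + \kappa\tau'\bigr)\,\mathbf B .
\]

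For the Cartan frame I would substitute these into (\ref{e1e2e3}). The relation $\mathbf e_1=\varphi\alpha'$ immediately yields (\ref{e1-riem}). For $\mathbf e_2$ and $\mathbf e_3$ one needs the derivatives of $\varphi$; differentiating $\varphi=\kappa^{-1/3}|\tau|^{-1/6}$ and using the identity $\tfrac{d}{d\sigma}|\tau|^{a}=a\,|\tau|^{a}\tau^{-1}\tau'$ --- legitimate since $\tau$ is nowhere zero and $\sign\tau=\varepsilon\varepsilon_3$ is locally constant --- produces $\varphi',\varphi'',\varphi'''$ as explicit rational functions of $\kappa,\tau$ and their derivatives. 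Plugging $\varphi,\varphi'$ and $\nabla_{\alpha'}\alpha'$ into the expression for $\mathbf e_2$ gives (\ref{e2-riem}); plugging $\varphi,\varphi',\varphi''$ together with $\nabla_{\alpha'}\alpha',\nabla^2_{\alpha'}\alpha'$ into the expression for $\mathbf e_3$ and collecting the coefficients of $\mathbf T,\mathbf N,\mathbf B$ gives (\ref{e3-riem}).

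For the equi-affine curvatures I would use the formulas (\ref{kap1})--(\ref{kap2}) of the first theorem, which express $\varkappa_1,\varkappa_2$ through $\varphi$ and its derivatives and through the two scalars $\varOmega(\alpha',\nabla^2_{\alpha'}\alpha',\nabla^3_{\alpha'}\alpha')$ and $\varOmega(\nabla_{\alpha'}\alpha',\nabla^2_{\alpha'}\alpha',\nabla^3_{\alpha'}\alpha')$. Using $\varOmega(\mathbf T,\mathbf N,\mathbf B)=1$ and the component expansions above, the first of these equals $\varepsilon_3\bigl(2\kappa'^2\tau+\kappa\kappa'\tau'-\kappa\kappa''\tau\bigr)+\varepsilon_1\varepsilon_2\varepsilon_3\kappa^4\tau+\varepsilon_2\kappa^2\tau^3$, and the second equals $\varepsilon_1\varepsilon_2\varepsilon_3\kappa^3(\kappa\tau'-\kappa'\tau)$. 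Inserting these and $\varphi,\varphi',\varphi'',\varphi'''$ into (\ref{kap1}) and (\ref{kap2}) and simplifying yields (\ref{kap1-riem}) and (\ref{kap2-riem}); throughout one uses $\varepsilon_3=\omega\varepsilon_1\varepsilon_2$ and $\varepsilon_1^2=\varepsilon_2^2=\varepsilon_3^2=1$. As an independent check one can verify that the $\mathbf B$-component of $\nabla_{\alpha'}\mathbf e_3$, obtained by differentiating (\ref{e3-riem}) via the Frenet equations, vanishes --- as it must, since $\nabla_{\mathbf e_1}\mathbf e_3\in\operatorname{span}(\mathbf e_1,\mathbf e_2)=\operatorname{span}(\mathbf T,\mathbf N)$ --- and that its $\mathbf T$- and $\mathbf N$-components reproduce $\varkappa_1$ and $\varkappa_2$.

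I expect the only obstacle to be computational bookkeeping: the closed forms for $\varphi''$ and $\varphi'''$ and the final substitution into (\ref{kap1})--(\ref{kap2}) involve lengthy polynomial manipulations, and care is needed with the sign factors $\varepsilon_1,\varepsilon_2,\varepsilon_3$ and with the distinction between powers of $\tau$ and of $|\tau|$ when terms are combined. No conceptual difficulty arises; the argument rests entirely on the two structural inputs, the Frenet equations (\ref{fren1})--(\ref{fren3}) and the already-proven identities (\ref{kap1})--(\ref{kap2}).
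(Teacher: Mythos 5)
Your proposal is correct, and I verified the key intermediate quantities: the expansion of $\nabla^3_{\alpha'}\alpha'$ in the Frenet frame and the two determinants $\varOmega\big(\alpha',\nabla^2_{\alpha'}\alpha',\nabla^3_{\alpha'}\alpha'\big)$ and $\varOmega\big(\nabla_{\alpha'}\alpha',\nabla^2_{\alpha'}\alpha',\nabla^3_{\alpha'}\alpha'\big)$ are exactly as you state, and substituting them together with $\varphi=\kappa^{-1/3}|\tau|^{-1/6}$ and its derivatives into (\ref{kap2}) does reproduce (\ref{kap2-riem}) (I checked this case in full; the $\varkappa_1$ case is the same machinery). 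For the Cartan frame your route coincides with the paper's: both amount to iterated covariant differentiation of $\varphi\mathbf T$ using the Frenet equations, whether one phrases it as substituting into (\ref{e1e2e3}) or as computing $\nabla_{\mathbf e_1}\mathbf e_1$ and $\nabla_{\mathbf e_1}\mathbf e_2$ directly. For the curvatures, however, you genuinely diverge from the paper: the paper does \emph{not} invoke its own Theorem~1. Instead it computes $\nabla_{\mathbf e_1}\mathbf e_3$ explicitly in the basis $(\mathbf T,\mathbf N,\mathbf B)$, observes that the $\mathbf B$-component cancels (which you relegate to a consistency check), and then solves the triangular linear system obtained by comparing with $\varkappa_1\mathbf e_1+\varkappa_2\mathbf e_2$ from (\ref{defvarkap}). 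Your route buys reuse of the already-proven closed formulas (\ref{kap1})--(\ref{kap2}) at the price of computing $\varphi'''$ and two $3\times3$ determinants; the paper's route keeps everything inside the Frenet frame and gets the vanishing of the $\mathbf B$-component as a built-in sanity check rather than an afterthought. Both are complete proofs, and the only caveats you flag --- bookkeeping with $\varepsilon_1,\varepsilon_2,\varepsilon_3$ and the distinction between $\tau$ and $|\tau|$ --- are indeed the only places where care is required.
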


\begin{proof}
At first, using (\ref{defCartfr}) and $\alpha' = \mathbf T$, we obtain $\mathbf e_1 = \varphi \alpha' = \varphi \mathbf T$, which by (\ref{phi}) gives (\ref{e1-riem}). Next, having (\ref{e1-riem}), we compute
\begin{align*}
  \mathbf e_2 
	&= \nabla_{\mathbf e_1}\mathbf e_1 \\[+3pt]
	&= \kappa^{-1/3} |\tau|^{-1/6} 
		 \nabla_{\mathbf T}\left(\kappa^{-1/3} |\tau|^{-1/6} \mathbf T\right) \\[+3pt]
	&= \null- 6^{-1} \kappa^{-5/3} \tau^{-1} |\tau|^{-1/3} 
		 \left(2\tau \kappa' + \kappa \tau'\right) \mathbf T 
			+ \kappa^{-2/3} |\tau|^{-1/3} \nabla_{\mathbf T}\mathbf T,
\end{align*}
which together with (\ref{fren1}) leads to (\ref{e2-riem}). In a similar manner, but having (\ref{e1-riem}) and (\ref{e2-riem}), we find
\begin{align*}
  \mathbf e_3 
	&= \nabla_{\mathbf e_1}\mathbf e_2 \\[+3pt]
  &= \kappa^{-1/3} |\tau|^{-1/6} 
		 \nabla_{\mathbf T}\left(\null- 6^{-1} \kappa^{-5/3} \tau^{-1} |\tau|^{-1/3} 
		   \left(2\tau \kappa' + \kappa \tau'\right) \mathbf T 
			 + \varepsilon_2 \kappa^{1/3} |\tau|^{-1/3} \mathbf N\right) \\[+3pt]
	&= 18^{-1} \kappa^{-3} \tau^{-2} |\tau|^{-1/2} 
	   \left(10 \tau^2 \kappa'^2 + 4 \kappa \tau \kappa' \tau' + 4 \kappa^2 \tau'^2 
		       - 6 \kappa \tau^2 \kappa'' - 3 \kappa^2 \tau \tau''\right) \mathbf T \\[+3pt]
	&\quad- 6^{-1} \kappa^{-2} \tau^{-1} |\tau|^{-1/2} 
	        \left(2 \tau \kappa' + \kappa \tau'\right) \nabla_{\mathbf T}\mathbf T \\[+3pt]
	&\quad+ 3^{-1} \varepsilon_2 \kappa^{-1} \tau^{-1} |\tau|^{-1/2} 
	        \left(\tau \kappa' - \kappa \tau'\right) \mathbf N 
					+ \varepsilon_2 |\tau|^{-1/2} \nabla_{\mathbf T}\mathbf N.
\end{align*}
Hence, applying (\ref{fren1}) and (\ref{fren2}), we obtain (\ref{e3-riem}). 

To prove (\ref{kap1-riem}) and (\ref{kap2-riem}) we express the covariant derivative $\nabla_{\mathbf e_1}\mathbf e_3$ with the help of the Frenet vector fields $\mathbf T$, $\mathbf N$, $\mathbf B$. To do it, having (\ref{e3-riem}), we compute 
\begin{align*}
  \nabla_{\mathbf e_1}\mathbf e_3 
    &= \kappa^{-1/3} |\tau|^{-1/6} \nabla_{\mathbf T}\mathbf e_3 \\[+3pt]
		&= 36^{-1} \kappa^{-13/3} \tau^{-11/3} 
		   \big(\null- 36 \varepsilon_1 \varepsilon_2 \kappa^4 \tau^3 \kappa' 
	               - 60 \tau^3 \kappa'^3 
	               + 18 \varepsilon_1 \varepsilon_2 \kappa^5 \tau^2 \tau' \\[+3pt]
	  &\quad- 26 \kappa \tau^2 \kappa'^2 \tau' 
	               - 20 \kappa^2 \tau \kappa' \tau'^2 
	               - 20 \kappa^3 \tau'^3 
	               + 64 \kappa \tau^3 \kappa' \kappa'' 
                 + 14 \kappa^2 \tau^2 \tau' \kappa'' \\[+3pt]
    &\quad+ 14 \kappa^2 \tau^2 \kappa' \tau'' 
	               + 25 \kappa^3 \tau \tau' \tau'' 
	               - 12 \kappa^2 \tau^3 \kappa'''
	               - 6 \kappa^3 \tau^2 \tau'''\big) \mathbf T \\[+3pt]
	  &\quad+ 18^{-1} \kappa^{-10/3} \tau^{-8/3} \\[+3pt]
		&\quad \left(\null- 18 \varepsilon_1 \varepsilon_2 \kappa^4 \tau^2 
								 + 10 \tau^2 \kappa'^2 + 4 \kappa \tau \kappa' \tau' 
								 + 4 \kappa ^2 \tau'^2 - 6 \kappa \tau^2 \kappa'' 
								 - 3 \kappa^2 \tau \tau''\right) \nabla_{\mathbf T}{\mathbf T} \\[+3pt]
		&\quad+ 4^{-1} \varepsilon_2 \kappa^{-1/3} \tau^{-8/3} 
		     \left(3 \tau'^2 - 2 \tau \tau''\right) \mathbf N 
		     - 2^{-1} \varepsilon_2 \kappa^{-1/3} \tau^{-5/3} \tau' 
		              \nabla_{\mathbf T}\mathbf N \\[+3pt]
		&\quad+ 2^{-1} \varepsilon_2 \varepsilon_3 \kappa^{-1/3} \tau^{-2/3} \tau' \mathbf B 
		     + \varepsilon_2 \varepsilon_3 \kappa^{-1/3} \tau^{1/3} 
				           \nabla_{\mathbf T}\mathbf B.
\end{align*}
Hence, applying the Frenet equations (\ref{fren1}) - (\ref{fren3}), we obtain
\begin{align*}
  \nabla_{\mathbf e_1}\mathbf e_3 
	&= 36^{-1} \kappa^{-13/3} \tau^{-11/3} 
	   \big(\null- 36 \varepsilon_1 \varepsilon_2 \kappa^4 \tau^3 \kappa'
	             - 60 \tau ^3 \kappa'^3
							 + 36 \varepsilon_1 \varepsilon_2 \kappa^5 \tau^2 \tau' \\[+3pt]
	&\quad- 26 \kappa \tau^2 \kappa'^2 \tau '
	      - 20 \kappa^2 \tau \kappa' \tau'^2
	      - 20 \kappa^3 \tau'^3
	      + 64 \kappa \tau^3 \kappa' \kappa''
	      + 14 \kappa^2 \tau^2 \tau' \kappa'' \nonumber\\[+3pt]
	&\quad+ 14 \kappa^2 \tau^2 \kappa' \tau''
	      + 25 \kappa^3 \tau \tau' \tau''
	      - 12 \kappa^2 \tau ^3 \kappa'''
	      - 6 \kappa^3 \tau^2 \tau''' \big) \mathbf T \nonumber\\[+3pt]
	&\quad+ 36^{-1} \kappa^{-13/3} \tau^{-11/3} 
	   \big(\null- 36 \varepsilon_1 \kappa^6 \tau^3 
	      - 36 \varepsilon_3 \kappa^4 \tau^5 
	      + 20 \varepsilon_2 \kappa^2 \tau^3 \kappa'^2 \nonumber\\[+3pt]
	&\quad+ 8 \varepsilon_2 \kappa^3 \tau^2 \kappa' \tau'
	      + 35 \varepsilon_2 \kappa^4 \tau \tau'^2
		    - 12 \varepsilon_2 \kappa^3 \tau^3 \kappa ''
	      - 24 \varepsilon_2 \kappa^4 \tau^2 \tau''\big) \mathbf N. \nonumber
\end{align*}
On the other hand, by (\ref{defvarkap}), (\ref{e1-riem}) and (\ref{e2-riem}) lead to
\begin{align*}
  \nabla_{\mathbf e_1}\mathbf e_3 
	&= \varkappa_1 \mathbf e_1 + \varkappa_2 \mathbf e_2 \\[+3pt]
	&= \left(\varkappa_1 \kappa^{-1/3} |\tau|^{-1/6} 
	         - 6^{-1} \varkappa_2 \kappa^{-5/3} \tau^{-1} |\tau|^{-1/3} 
		       \left(2\tau \kappa' + \kappa \tau'\right)\right) \mathbf T \\[+3pt]
	&\quad\null+ \varkappa_2 \varepsilon_2 \kappa^{1/3} |\tau|^{-1/3} \mathbf N
\end{align*}
Comparing the last two expressions for $\nabla_{\mathbf e_1}\mathbf e_3$, we obtain the following system of linear equations with respect to $\varkappa_1$ and $\varkappa_2$
\begin{align*}
  &\varkappa_1 \kappa^{-1/3} |\tau|^{-1/6} 
	         - 6^{-1} \varkappa_2 \kappa^{-5/3} \tau^{-1} |\tau|^{-1/3} 
		       \left(2\tau \kappa' + \kappa \tau'\right) \\[+3pt]
	&= 36^{-1} \kappa^{-13/3} \tau^{-11/3} 
	   \big(\null- 36 \varepsilon_1 \varepsilon_2 \kappa^4 \tau^3 \kappa'
	             - 60 \tau ^3 \kappa'^3
							 + 36 \varepsilon_1 \varepsilon_2 \kappa^5 \tau^2 \tau' \\[+3pt]
	&\quad- 26 \kappa \tau^2 \kappa'^2 \tau '
	      - 20 \kappa^2 \tau \kappa' \tau'^2
	      - 20 \kappa^3 \tau'^3
	      + 64 \kappa \tau^3 \kappa' \kappa''
	      + 14 \kappa^2 \tau^2 \tau' \kappa'' \nonumber\\[+3pt]
	&\quad+ 14 \kappa^2 \tau^2 \kappa' \tau''
	      + 25 \kappa^3 \tau \tau' \tau''
	      - 12 \kappa^2 \tau ^3 \kappa'''
	      - 6 \kappa^3 \tau^2 \tau''' \big), \\[+3pt]
	&\varkappa_2 \varepsilon_2 \kappa^{1/3} |\tau|^{-1/3} \\[+3pt]
	&= 36^{-1} \kappa^{-13/3} \tau^{-11/3} 
	   \big(\null- 36 \varepsilon_1 \kappa^6 \tau^3 
	      - 36 \varepsilon_3 \kappa^4 \tau^5 
	      + 20 \varepsilon_2 \kappa^2 \tau^3 \kappa'^2 \nonumber\\[+3pt]
	&\quad+ 8 \varepsilon_2 \kappa^3 \tau^2 \kappa' \tau'
	      + 35 \varepsilon_2 \kappa^4 \tau \tau'^2
		    - 12 \varepsilon_2 \kappa^3 \tau^3 \kappa ''
	      - 24 \varepsilon_2 \kappa^4 \tau^2 \tau''\big).
\end{align*}
Solving the above system, we obtain (\ref{kap1-riem}) and (\ref{kap2-riem}).
\end{proof}

The below corollaries are consequences of the above theorem. We use them in the next section.

\begin{corollary}
\label{coro1}
Let $\alpha$ be a non-null, non-degenerate and arc length parametrized curve in a 3-dimensional oriented pseudo-Riemannian manifold. If the Frenet curvature $\kappa$ and torsion $\tau$ are constant, then the equiaffine curvatures are also constant and 
\begin{align*}
  \varkappa_1 = 0, \quad 
	\varkappa_2 = -\dfrac{\varepsilon_2 (\varepsilon_1 \kappa^2 + \varepsilon_3 \tau^2)}
	                     {\kappa^{2/3} |\tau|^{1/3}}.
\end{align*}
\end{corollary}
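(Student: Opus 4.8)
The plan is to obtain the corollary as a direct specialization of the formulas (\ref{kap1-riem}) and (\ref{kap2-riem}) of the preceding theorem. First I would note that the hypothesis $\kappa = \const$, $\tau = \const$ forces all the higher derivatives $\kappa'$, $\kappa''$, $\kappa'''$, $\tau'$, $\tau''$, $\tau'''$ to vanish identically along $\alpha$.

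Next, I would inspect the right-hand side of (\ref{kap1-riem}): each of the fourteen monomials inside the large parenthesis contains at least one factor drawn from $\{\kappa',\kappa'',\kappa''',\tau',\tau'',\tau'''\}$, so the whole bracket vanishes and hence $\varkappa_1 = 0$. Turning to (\ref{kap2-riem}), the same inspection shows that exactly two terms survive the substitution, giving
\begin{align*}
  \varkappa_2 = 36^{-1}\kappa^{-8/3}\tau^{-2}|\tau|^{-1/3}
                \left(-36\varepsilon_1\varepsilon_2\kappa^4\tau^2 - 36\varepsilon_2\varepsilon_3\kappa^2\tau^4\right).
\end{align*}
A short simplification --- cancelling $36$, factoring out $-\varepsilon_2\kappa^2\tau^2$ (legitimate since $\tau\neq 0$ everywhere, by the remark following (\ref{omega-1})), and using $\kappa^{-8/3}\kappa^2 = \kappa^{-2/3}$ --- then yields
\begin{align*}
  \varkappa_2 = -\frac{\varepsilon_2(\varepsilon_1\kappa^2 + \varepsilon_3\tau^2)}{\kappa^{2/3}|\tau|^{1/3}},
\end{align*}
which is the asserted expression; it is constant because $\kappa$, $\tau$, $\varepsilon_1$, $\varepsilon_2$, $\varepsilon_3$ all are.

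I do not anticipate any genuine obstacle: the corollary is a routine consequence of the theorem, the only computation being the elementary algebraic tidy-up of the surviving term in $\varkappa_2$. As an alternative route, one could re-derive the result from scratch by observing that for constant $\kappa,\tau$ the Cartan frame formulas (\ref{e1-riem})--(\ref{e3-riem}) collapse --- $\mathbf e_2$ becomes a constant multiple of $\mathbf N$, and $\mathbf e_3$ a constant combination of $\mathbf T$ and $\mathbf B$ --- whereupon $\nabla_{\mathbf e_1}\mathbf e_3$ can be computed directly from the Frenet equations (\ref{fren1})--(\ref{fren3}) and matched against $\varkappa_1\mathbf e_1 + \varkappa_2\mathbf e_2$; but the substitution approach is the shortest.
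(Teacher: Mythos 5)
Your proposal is correct and is exactly the argument the paper intends: the corollary is stated as a direct consequence of the theorem, obtained by setting $\kappa'=\kappa''=\kappa'''=\tau'=\tau''=\tau'''=0$ in (\ref{kap1-riem}) and (\ref{kap2-riem}), which kills every term of $\varkappa_1$ and leaves only the two derivative-free terms of $\varkappa_2$, simplifying to the stated expression. Your count of the monomials and the algebraic reduction of $\varkappa_2$ both check out.
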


\begin{corollary}
\label{coro2}
Let $\alpha$ be a non-null, non-degenerate and arc length parametrized curve in a 3-dimensional oriented pseudo-Riemannian manifold. If the Frenet curvature and torsion of $\alpha$ are given by 
\begin{align*}
  \kappa(t) = \dfrac{A}{t}, \quad \tau(t) = \dfrac{B}{t}, \quad t > 0,
\end{align*}
where $A$ and $B$ are non-zero constants, then the equi-affine curvatures of $\alpha$ are 
\begin{align*}
  \varkappa_1 = \dfrac{1 + 4 \varepsilon_2(\varepsilon_1 A^2 + \varepsilon_3 B^2)}
	                     {8 A \sqrt{|B|\, t^3}}, \quad 
	\varkappa_2 = - \dfrac{1 + 4 \varepsilon_2(\varepsilon_1 A^2 + \varepsilon_3 B^2)}
	                      {4 \sqrt[3]{A^2 |B|}\, t}.
\end{align*}
\end{corollary}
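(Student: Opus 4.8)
The plan is to substitute $\kappa(t)=A/t$ and $\tau(t)=B/t$ directly into the closed formulas (\ref{kap1-riem}) and (\ref{kap2-riem}) of the preceding theorem and simplify; no new geometry is needed. First I would record the derivatives that actually occur there: $\kappa'=-A/t^{2}$, $\kappa''=2A/t^{3}$, $\kappa'''=-6A/t^{4}$, and likewise $\tau'=-B/t^{2}$, $\tau''=2B/t^{3}$, $\tau'''=-6B/t^{4}$. Since $t>0$ we have $|\tau|=|B|/t$, hence $|\tau|^{-1/6}=|B|^{-1/6}t^{1/6}$, $|\tau|^{-1/3}=|B|^{-1/3}t^{1/3}$ and $|\tau|^{-1/2}=|B|^{-1/2}t^{1/2}$; keeping $\varepsilon_{1},\varepsilon_{2},\varepsilon_{3}$ symbolic throughout avoids any case distinction on signatures.

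The observation that makes the substitution short is that every monomial in $\kappa,\tau$ and their derivatives appearing inside the large parenthesis of (\ref{kap1-riem}) has total weight $9$ in $1/t$ and equals a numerical-and-$\varepsilon$ multiple of exactly one of $A^{5}B^{3}/t^{9}$, $A^{3}B^{5}/t^{9}$, $A^{3}B^{3}/t^{9}$; similarly each monomial in the parenthesis of (\ref{kap2-riem}) equals a multiple of one of $A^{4}B^{2}/t^{6}$, $A^{2}B^{4}/t^{6}$, $A^{2}B^{2}/t^{6}$. So each bracket collapses once the coefficients are collected. Carrying this out, the bracket in (\ref{kap2-riem}) becomes $-9A^{2}B^{2}t^{-6}\bigl(1+4\varepsilon_{2}(\varepsilon_{1}A^{2}+\varepsilon_{3}B^{2})\bigr)$, where the signless contributions $20+8+35-24-48$ sum to $-9$; and the bracket in (\ref{kap1-riem}) becomes $27A^{3}B^{3}t^{-9}\bigl(1+4\varepsilon_{2}(\varepsilon_{1}A^{2}+\varepsilon_{3}B^{2})\bigr)$, where the $A^{5}B^{3}\varepsilon_{1}\varepsilon_{2}$ contributions give $288-180=108$, the $A^{3}B^{5}\varepsilon_{2}\varepsilon_{3}$ contributions give $72+36=108$, and the signless contributions give $320+120+42+85-720-144-72-252+432+216=27$.

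It then remains to multiply by the prefactors. For (\ref{kap2-riem}), $\tfrac1{36}\kappa^{-8/3}\tau^{-2}|\tau|^{-1/3}=\tfrac1{36}A^{-8/3}B^{-2}|B|^{-1/3}t^{5}$, and multiplying by the collapsed bracket yields $\varkappa_{2}=-\tfrac14\,(A^{2}|B|)^{-1/3}t^{-1}\bigl(1+4\varepsilon_{2}(\varepsilon_{1}A^{2}+\varepsilon_{3}B^{2})\bigr)$, which is the asserted expression. For (\ref{kap1-riem}), $\tfrac1{216}\kappa^{-4}\tau^{-3}|\tau|^{-1/2}=\tfrac1{216}A^{-4}B^{-3}|B|^{-1/2}t^{15/2}$, and multiplying by the collapsed bracket yields $\varkappa_{1}=\tfrac18\,A^{-1}(|B|t^{3})^{-1/2}\bigl(1+4\varepsilon_{2}(\varepsilon_{1}A^{2}+\varepsilon_{3}B^{2})\bigr)$, again as asserted.

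I expect the only delicate points to be clerical: getting the signs of the odd-order derivatives right and not dropping the absolute values when $B<0$ (so that $|\tau|^{-1/2}t^{15/2-9}=(|B|t^{3})^{-1/2}$ and $A^{-8/3+2}|B|^{-1/3}=(A^{2}|B|)^{-1/3}$). The genuine ``work'' is the three coefficient sums displayed above, and they are short; everything else is bookkeeping of powers of $t$, $A$, $B$.
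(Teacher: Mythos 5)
Your proposal is correct and is exactly the route the paper intends: the paper gives no separate proof of Corollary \ref{coro2} beyond stating that it is a consequence of the preceding theorem, i.e.\ direct substitution of $\kappa=A/t$, $\tau=B/t$ into (\ref{kap1-riem}) and (\ref{kap2-riem}). I checked your three coefficient sums ($-9$, $108$, $108$, $27$) and the bookkeeping of powers of $t$, $A$, $B$ and the absolute values; they all come out as you state.
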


\section{Examples of Frenet curves with constant equi-affine curvatures}

As it is stated in Corollary \ref{coro1}, the constancy of the curvature and torsion of a Frenet curve always implies the constancy of the equi-affine curvatures. Frenet curves with constant curvature and torsion are usually called helices, and we can add that there are very many examples of such curves. 

As it follows from the below discussion and examples, the statement which is converse to that given in Corollary \ref{coro1} does not hold in general.

Curves of constant equi-affine curvatures in the tree dimensional affine space $\mathbb A^3 = (\mathbb R^3, \text{D}, \Det)$ ($\text{D}$ being the standard flat connection in the Cartesian space $\mathbb R^3$) are already classified (\cite{Bla,Gug}). It is obvious that the most of these curves, when treated as curves in the Euclidean space $\mathbb E^3$ or the Minkowski space $\mathbb E^3_1$, do not have constant Frenet curvature and torsion. 

Below examples show that in certain non-flat Lorentzian manifolds, there exist non-degenerate curves with constant (precisely, zero) equi-affine curvatures and whose Frenet curvature and torsion are non-constant. 

\medskip
\begin{example}{1}
Let us define a Lorentzian metric $g$ in $\mathbb R^3$ by 
\begin{align*}
  g  = e^{2z}(dx^2 + dy^2 - dz^2),
\end{align*}
where $(x^1=x,x^2=y,x^3=z)$ are the Cartesian coordinates in $\mathbb R^3$. The Levi-Civita connection $\nabla$ is given by
\begin{eqnarray*}
  &\nabla_{\partial_x}\partial_x 
	= \nabla_{\partial_y}\partial_y 
	= \nabla_{\partial_z}\partial_z 
	= \partial_z, \quad
	\nabla_{\partial_x}\partial_y 
	= \nabla_{\partial_y}\partial_x 
	= 0, \\
	&\nabla_{\partial_x}\partial_z 
	= \nabla_{\partial_z}\partial_x 
	= \partial_x, \quad
	\nabla_{\partial_y}\partial_z 
	=\nabla_{\partial_z}\partial_y 
	= \partial_y, &
\end{eqnarray*}
where $\partial_x = \partial/\partial x$, $\partial_y = \partial/\partial y$, $\partial_z = \partial/\partial z$. This metric is Lorentzian, non-flat, conformally flat with non-constant positive scalar curvature, but we omit the details. The volume element is 
\begin{align*}
  \varOmega = 3! e^{3z} dx \wedge dy \wedge dz.
\end{align*}

In $(\mathbb R^3,g)$ , consider the curve 
\begin{align*}
  \alpha(t) = \left(\dfrac{\sqrt{a^2+\lambda}}{b} \cos\psi(t), 
	                  \dfrac{\sqrt{a^2+\lambda}}{b} \sin\psi(t), 
										\ln(a t)\right),
	\ t \in (0,\infty),
\end{align*}
where $\psi(t) = (b/a) \ln(a t)$, $\lambda=\pm1$, and $a,b$ are constants such that $a > 0$, $a^2+\lambda>0$, $b \neq 0$, $b^2 - \lambda >0$. By direct computations, we find 
\begin{align*}
  \alpha'(t) &= \null- \dfrac{\sqrt{a^2+\lambda}}{at} \sin \psi(t)\, 
	                     {\partial_x}\big|_{\alpha(t)} 
	              + \dfrac{\sqrt{a^2+\lambda}}{at} \cos \psi(t)\, 
								       {\partial_y}\big|_{\alpha(t)}
								+ \dfrac{1}{t} \,{\partial_z}\big|_{\alpha(t)}, \\[+3pt]
  \big(\nabla_{\alpha'}\alpha'\big)(t) &= 
	       \null- \dfrac{\sqrt{a^2+\lambda}}{a^2 t^2} 
				        \left(b \cos \psi(t) + a \sin \psi(t)\right) 
				        {\partial_x}\big|_{\alpha(t)} \\[+3pt]
	       &\qquad+ \dfrac{\sqrt{a^2+\lambda}}{a^2 t^2} 
				        \left(a \cos \psi(t) - b \sin \psi(t)\right) 
							  {\partial_y}\big|_{\alpha(t)} 
				  + \dfrac{a^2+\lambda}{a^2 t^2} \,{\partial_z}\big|_{\alpha(t)}, \\[+3pt]
	\big(\nabla^2_{\alpha'}\alpha'\big)(t) &= 
	        \dfrac{(b^2-\lambda) \sqrt{a^2+\lambda}}{a^3 t^3} 
				  \left(\sin \psi(t)\, {\partial_x}\big|_{\alpha(t)}
	              - \cos \psi(t)\, {\partial_y}\big|_{\alpha(t)}\right).
\end{align*}
Therefore, it can be derived that 
\begin{eqnarray*}
  & g(\alpha',\alpha') = \lambda, \quad 
	  g(\nabla_{\alpha'}\alpha',\nabla_{\alpha'}\alpha') 
		  = \dfrac{(a^2+\lambda)(b^2-\lambda)}{a^2 t^2}, \\[+3pt]
	& \varOmega\left(\alpha', \nabla_{\alpha'} \alpha', \nabla^2_{\alpha'} \alpha'\right) = 
	  \dfrac{b(a^2+\lambda)(b^2-\lambda)}{a^2 t^3}.
\end{eqnarray*}
Moreover, we find 
\begin{align*}
	\alpha'(t) \times \big(\nabla_{\alpha'}\alpha'\big)(t) 
	 &= \dfrac{\sqrt{a^2+\lambda}}{a^2 t^2} 
			\left(\lambda \cos \psi(t) + a b \sin \psi(t)\right) 
			{\partial_x}\big|_{\alpha(t)} \\[+3pt]
	 &\null- \dfrac{\sqrt{a^2+\lambda}}{a^2 t^2} 
			\left(a b \cos \psi(t) - \lambda \sin \psi(t)\right) 
		{\partial_y}\big|_{\alpha(t)} 
	  - \dfrac{b(a^2+\lambda)}{a^2 t^2} \,{\partial_z}\big|_{\alpha(t)},
\end{align*}
and consequently, 
\begin{align*}
	 g\left(\alpha'(t) \times \left(\nabla_{\alpha'}\alpha'\right)(t), 
	   \alpha'(t) \times \big(\nabla_{\alpha'}\alpha'\big)(t)\right) 
	 = - \dfrac{\lambda (a^2 + \lambda)(b^2 - \lambda)}{a^2 t^2}. 
\end{align*}
Under our assumptions about the constants $a,b$, we deduce that $\nu(t) = 1$, $\varepsilon_1 = \lambda$, $\varepsilon_2 = 1$, $\varepsilon_3 = -\lambda$, $\varepsilon = \sign b$ and 
\begin{align*}
	\kappa(t) = \dfrac{\sqrt{(a^2 + \lambda)(b^2 -\lambda)}}{a t},\quad 
	\tau(t) = - \dfrac{\lambda b}{t}.
\end{align*}
The curves are non-degenerate, and spacelike when $\lambda = 1$ and timelike when $\lambda = -1$. By Corollary \ref{coro2}, these curves have the equi-affine curvatures equal to 
\begin{align*}
  \varkappa_1 &= \dfrac{\null- 3a^2 + 4(b^2-\lambda)}
	                    {8 a \sqrt{|b| (a^2+\lambda)(b^2-\lambda) t^3}}, \\
	\varkappa_2 &= \dfrac{3a^2 - 4(b^2-\lambda)}
	                     {4 a \sqrt[3]{ a |b| (a^2+\lambda)(b^2-\lambda)}\, t}.
\end{align*}
Therefore, $\varkappa_1 = \varkappa_2 = 0$ if and only if $3a^2 = 4(b^2-\lambda)$. One can easily observe that this condition can be realized by some constants for spacelike curves as well as for timelike curves.
\end{example}

\medskip
\begin{example}{2} 
Let us define a Lorentzian metric $g$ in $\mathbb R^3$ by 
\begin{align*}
  g  = e^{2x}(dx^2 + dy^2 - dz^2),
\end{align*}
where $(x^1=x,x^2=y,x^3=z)$ are the Cartesian coordinates in $\mathbb R^3$. The Levi-Civita connection $\nabla$ is given by
\begin{eqnarray*}
  &\nabla_{\frac{\partial}{\partial x}}\dfrac{\partial}{\partial x} 
	= - \nabla_{\frac{\partial}{\partial y}}\dfrac{\partial}{\partial y} 
	= \nabla_{\frac{\partial}{\partial z}}\dfrac{\partial}{\partial z}
	= \dfrac{\partial}{\partial x}, \quad
	\nabla_{\frac{\partial}{\partial x}}\dfrac{\partial}{\partial y} 
	= \nabla_{\frac{\partial}{\partial y}}\dfrac{\partial}{\partial x} 
	= \dfrac{\partial}{\partial y},& \\
	&\nabla_{\frac{\partial}{\partial x}}\dfrac{\partial}{\partial z} 
	= \nabla_{\frac{\partial}{\partial z}}\dfrac{\partial}{\partial x} 
	= \dfrac{\partial}{\partial z},\quad
	\ \ \text{and} \ \ \nabla_{\frac{\partial}{\partial x^i}}\dfrac{\partial}{\partial x^j} = 0
	\ \ \text{otherwise.}&
\end{eqnarray*}
This metric $g$ is Lorentzian, non-flat, conformally flat with non-constant negative scalar curvature, but we omit the details. The volume element is of the form
\begin{align*}
  \varOmega = 3! e^{3x} dx \wedge dy \wedge dz.
\end{align*}

In $(R^3,g)$ , consider the curve 
\begin{align*}
  \alpha(t) = \left(\ln(at), \dfrac{\sqrt{1-a^2}}{b} \sinh\psi(t), 
	                           \dfrac{\sqrt{1-a^2}}{b} \cosh\psi(t)\right),
	\quad t \in (0,\infty),
\end{align*}
where $\psi(t) = (b/a) \ln(at)$, and $a,b$ are constants such that $0<a<1$, $0<b^2<1$. At first, we find 
\begin{align*}
  \alpha'(t) &= \dfrac{1}{t}\, \,{\partial_x}\big|_{\alpha(t)}
								+ \dfrac{\sqrt{1-a^2}}{at} \left(\cosh\psi(t)\,{\partial_y}\big|_{\alpha(t)}
	              + \sinh\psi(t)\,{\partial_z}\big|_{\alpha(t)}\right), \\[+3pt]
  \big(\nabla_{\alpha'}\alpha'\big)(t) &= 
				 \dfrac{a^2-1}{a^2t^2} \,{\partial_x}\big|_{\alpha(t)}
	       + \dfrac{\sqrt{1-a^2}}{a^2t^2} \left(a \cosh\psi(t) + b \sinh\psi(t)\right)
	         {\partial_y}\big|_{\alpha(t)} \\[+3pt]
	       &\qquad+ \dfrac{\sqrt{1-a^2}}{a^2t^2} \left(b \cosh\psi(t) + a \sinh\psi(t)\right) 
								  {\partial_z}\big|_{\alpha(t)}, \\[+3pt]
	\big(\nabla^2_{\alpha'}\alpha'\big)(t) &= 
	       \dfrac{(b^2-1)\sqrt{1-a^2}}{a^3t^3} 
	       \left(\cosh\psi(t) {\partial_y}\big|_{\alpha(t)} 
				        + \sinh\psi(t) {\partial_z}\big|_{\alpha(t)}\right).
\end{align*}
Therefore, it can be derived that 
\begin{eqnarray*}
  & g(\alpha',\alpha') = 1, \quad 
	  g(\nabla_{\alpha'}\alpha',\nabla_{\alpha'}\alpha') = \dfrac{(1-a^2)(1-b^2)}{a^2t^2}, &\\[+3pt]
  & \varOmega\left(\alpha', \nabla_{\alpha'} \alpha', \nabla^2_{\alpha'} \alpha'\right) = 
	  \dfrac{b(1-a^2)(1-b^2)}{a^2t^3}. &
\end{eqnarray*}
Moreover, we find
\begin{align*}
	\alpha'(t) \times \big(\nabla_{\alpha'}\alpha'\big)(t) 
	&= \dfrac{b(1-a^2)}{a^2t^2}\,{\partial_x}\big|_{\alpha(t)}
				- \dfrac{\sqrt{1-a^2}}{a^2t^2} \left(a b \cosh\psi(t) + \sinh\psi(t)\right)
	        {\partial_y}\big|_{\alpha(t)} & \\
	&\null- \dfrac{\sqrt{1-a^2}}{a^2t^2} \left(\cosh\psi(t) + a b \sinh\psi(t)\right)
				  \,{\partial_z}\big|_{\alpha(t)}, &
\end{align*}
and consequently,
\begin{align*}
  g\left(\alpha'(t) \times \big(\nabla_{\alpha'}\alpha'\big)(t),
	       \alpha'(t) \times \big(\nabla_{\alpha'}\alpha'\big)(t)\right)
	= - \dfrac{(1-a^2)(1-b^2)}{a^2t^2}.
\end{align*}

In view of the above formulas and our assumptions about the constants $a,b$, we have $\nu(t) = \varepsilon_1 = \varepsilon_2 = 1$, $\varepsilon_3 = -1$, $\varepsilon = \sign(b)$ and 
\begin{align*}
	\kappa(t) = \dfrac{\sqrt{(1-a^2)(1-b^2)}}{at},\quad \tau(t) = -\dfrac{b}{t}.
\end{align*}
The curves are spacelike and non-degenerate. By Corollary \ref{coro2}, their equi-affine curvatures are 
\begin{align*}
  \varkappa_1 &= \dfrac{4 - 3a^2 - 4b^2}{8 a \sqrt{|b|(1-a^2)(1-b^2)t^3}}, \\[+3pt]
	\varkappa_2 &= \dfrac{3a^2 + 4b^2 - 4}{8 a \sqrt[3]{a |b| (1-a^2)(1-b^2)}\,t}.
\end{align*}
Therefore, $\varkappa_1 = \varkappa_2 = 0$ if and only if $3a^2 + 4b^2 = 4$. One easily observes the existence of the values of $a,b$ realizing this condition. 
\end{example}

\section{Equi-affine curvatures of null curves in 3-dimensional Lorentzian manifolds}

Constructing the Frenet frame and the curvature functions of a null curve in an arbitrary 3-dimensional Lorentzian manifold, we follow \cite{Dug, DJ}; cf. also \cite{FGL}.

Let $(M^3,g)$ be a $3$-dimensional, connected, oriented Lorentzian manifold. As in the previous sections, we treat this manifold as the equi-affine manifold $(M^3,\nabla,\varOmega)$ with $\nabla$ and $\varOmega$ being the Levi-Civita connection and the natural volume element related to the Lorentzian metric $g$.

Let $\alpha\colon I\to M^3$, $I$ being an open interval, be a null and non-degenerate curve in $(M^3,g)$. Thus, $g(\alpha',\alpha')=0$ and the vector fields $\alpha'$, $\nabla_{\alpha'}\alpha'$, $\nabla^2_{\alpha'}\alpha'$ are linearly independent at every point of the curve. Note that since additionally $g(\nabla_{\alpha'}\alpha',\alpha') = 0$, it must be that $g(\nabla_{\alpha'}\alpha', \nabla_{\alpha'}\alpha')>0$, and consequently 
\begin{align*}
  g(\nabla^2_{\alpha'}\alpha', \alpha') 
	= -g(\nabla_{\alpha'}\alpha', \nabla_{\alpha'}\alpha')<0.
\end{align*}
As it is well-known, the curve $\alpha$ can be reparametrized in such a way that 
\begin{align*}
  g(\nabla_{\alpha'}\alpha',
		  \nabla_{\alpha'}\alpha') = 1. 
\end{align*}
Such a parametrization is called pseudo-arc length (distinguish). In the sequel, we assume the curve $\alpha$ is pseudo-arc length parametrized.

For the curve $\alpha$, define the function $\tau$ by 
\begin{align}
\label{kap-null}
  \tau = \frac12
     g(\nabla^2_{\alpha'}\alpha',
       \nabla^2_{\alpha'}\alpha').
\end{align}
Thus, for the vector fields $\alpha'$, $\nabla_{\alpha'}\alpha'$, $\nabla^2_{\alpha'}\alpha'$, we have 
\begin{equation}
\label{scprod}
\begin{array}{|c||c|c|c|}
  \hline
  g(\cdot,\cdot) & \alpha' & \nabla_{\alpha'}\alpha' 
      & \nabla^2_{\alpha'}\alpha' \\
  \hline\hline
  \alpha' & 0 & 0 & -1 \\
  \hline
  \nabla_{\alpha'}\alpha' & 0 & 1 & 0 \\
  \hline
  \nabla^2_{\alpha'}\alpha' 
      & -1 & 0 & 2\tau \\
  \hline 
\end{array}
\end{equation}

Define the three vector fields $\mathbf L$, $\mathbf N$ and $\mathbf W$ along the curve $\alpha$ by 
\begin{align}
\label{LWN}
  \mathbf L = \alpha',\quad
  \mathbf W = \nabla_{\alpha'}\alpha',\quad 
  \mathbf N = \null -\nabla^2_{\alpha'}
                      \alpha' - \tau\alpha',
\end{align}
$\tau$ being the function defined by (\ref{kap-null}). In view of (\ref{scprod}), the scalar products of these vector fields are given by 
\begin{align}
\label{scpr-arc}
  g(\mathbf L,\mathbf N) = 
  g(\mathbf W,\mathbf W) = 1,\quad
  g(\mathbf L,\mathbf L) = 
  g(\mathbf L,\mathbf W) = 
  g(\mathbf N,\mathbf N) = 
  g(\mathbf N,\mathbf W) = 0. 
\end{align}
These vector fields satisfy the following system of differential equations  
\begin{align}
\label{Fren-null}
  \nabla_{\alpha'}\mathbf L = \mathbf W,\quad
  \nabla_{\alpha'}\mathbf N = \tau \mathbf W,\quad
  \nabla_{\alpha'}\mathbf W = \null -\tau \mathbf L - \mathbf N.
\end{align}
The frame $(\mathbf L, \mathbf N, \mathbf W)$ will be called the Frenet frame of the null curve $\alpha$, and the function $\tau$ will be called its pseudo-torsion.

\begin{theorem}
\label{theo3}
Let $\alpha$ be a null, non-degenerate and pseudo-arc length parametrized curve in a 3-dimensional oriented Lorentzian manifold. Then, the parametrization of $\alpha$ is equi-affine arc length and the Cartan frame $(\mathbf e_1, \mathbf e_2, \mathbf e_3)$ of $\alpha$ can be expressed with the help of the Frenet frame $(\mathbf L, \mathbf N, \mathbf W)$ as it follows
\begin{align}
\label{e123-lor}
  \mathbf e_1 = \mathbf L,\quad
	\mathbf e_2 = \mathbf W, \quad 
	\mathbf e_3 = \null- \tau \mathbf L - \mathbf N.
\end{align}
Moreover, the equi-affine curvatures $\varkappa_1$, $\varkappa_2$ of $\alpha$ can be expressed with the help of the pseudo-torsion $\tau$ as it follows
\begin{align}
\label{kap12-lor}
  \varkappa_1 = - \tau',\quad 
	\varkappa_2 = - 2 \tau.
\end{align}
\end{theorem}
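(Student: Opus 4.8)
The plan is to verify directly, using the Frenet-type equations (\ref{Fren-null}) for a null curve, that the definitions (\ref{defCartfr}) of the Cartan frame produce exactly the vector fields claimed in (\ref{e123-lor}), and then to read off the equi-affine curvatures from (\ref{defvarkap}). First I would check that the given (pseudo-arc length) parametrization is already equi-affine arc length: from the scalar-product table (\ref{scprod}) and the identity $\varOmega(U_1,U_2,U_3)\varOmega(V_1,V_2,V_3)=\omega\,\Det([g(U_i,V_j)])$ stated at the end of Section~3, with $\omega=-1$ in the Lorentzian case, one computes $\varOmega(\alpha',\nabla_{\alpha'}\alpha',\nabla^2_{\alpha'}\alpha')^2 = -\Det\left(\begin{smallmatrix}0&0&-1\\0&1&0\\-1&0&2\tau\end{smallmatrix}\right) = 1$, so $\mu=1$ and $\varphi=1$. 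Hence $\sigma(t)=\int_{t_0}^t 1\,du = t-t_0$, i.e. the parameter is the equi-affine arc length, and moreover $\varepsilon = \sign\varOmega(\alpha',\nabla_{\alpha'}\alpha',\nabla^2_{\alpha'}\alpha')$; a short sign check (the frame $(\mathbf L,\mathbf W,\mathbf N)$ being related to an orthonormal one with the right orientation) should give $\varepsilon=+1$, which I would confirm but not belabor.

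With $\varphi\equiv 1$, definition (\ref{defCartfr}) gives $\mathbf e_1=\varphi\alpha'=\alpha'=\mathbf L$ immediately, and then $\mathbf e_2=\nabla_{\mathbf e_1}\mathbf e_1=\nabla_{\alpha'}\alpha'=\mathbf W$ by the first equation of (\ref{Fren-null}), and $\mathbf e_3=\nabla_{\mathbf e_1}\mathbf e_2=\nabla_{\alpha'}\mathbf W = -\tau\mathbf L-\mathbf N$ by the third equation of (\ref{Fren-null}); this is exactly (\ref{e123-lor}). As a consistency check one verifies $\varOmega(\mathbf e_1,\mathbf e_2,\mathbf e_3)=\varOmega(\mathbf L,\mathbf W,-\tau\mathbf L-\mathbf N)=-\varOmega(\mathbf L,\mathbf W,\mathbf N)$, which by the same determinant formula applied to (\ref{scpr-arc}) equals $\pm1$ and should come out to $\varepsilon=+1$, agreeing with the general computation $\varOmega(\mathbf e_1,\mathbf e_2,\mathbf e_3)=\varepsilon$ in Section~2.

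To get the curvatures I would differentiate $\mathbf e_3=-\tau\mathbf L-\mathbf N$ along $\alpha'$: using (\ref{Fren-null}),
\begin{align*}
  \nabla_{\mathbf e_1}\mathbf e_3 = \nabla_{\alpha'}(-\tau\mathbf L-\mathbf N)
  = -\tau'\mathbf L - \tau\nabla_{\alpha'}\mathbf L - \nabla_{\alpha'}\mathbf N
  = -\tau'\mathbf L - \tau\mathbf W - \tau\mathbf W
  = -\tau'\mathbf e_1 - 2\tau\mathbf e_2,
\end{align*}
since $\mathbf L=\mathbf e_1$ and $\mathbf W=\mathbf e_2$. Comparing with (\ref{defvarkap}), $\nabla_{\mathbf e_1}\mathbf e_3=\varkappa_1\mathbf e_1+\varkappa_2\mathbf e_2$, yields at once $\varkappa_1=-\tau'$ and $\varkappa_2=-2\tau$, which is (\ref{kap12-lor}). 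There is no real obstacle here: the only subtlety is the bookkeeping of signs and of the orientation constant $\varepsilon$ (making sure $\omega=-1$ is used correctly and that $\varepsilon=+1$ rather than $-1$), and checking that the reparametrization claim is precisely "is already equi-affine arc length" rather than merely "can be reparametrized so." Everything else is a two-line application of the null Frenet equations.
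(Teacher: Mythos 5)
Your argument is correct and follows essentially the same route as the paper: establish $\varphi=1$ via the Gram-determinant identity applied to the table (\ref{scprod}), read off (\ref{e123-lor}) from the null Frenet equations (\ref{Fren-null}), and compute $\nabla_{\mathbf e_1}\mathbf e_3=-\tau'\mathbf e_1-2\tau\mathbf e_2$; your only deviation is extracting $\varkappa_1,\varkappa_2$ by comparing coefficients in (\ref{defvarkap}) rather than via the formulas (\ref{kp1})--(\ref{kp2}), which is equivalent (and has the advantage of avoiding $\varepsilon$ altogether). One caveat: your side claim that $\varepsilon=+1$ is unjustified and false in general --- the paper only establishes $\varOmega(\mathbf L,\mathbf N,\mathbf W)=\varepsilon$ with $\varepsilon=\pm1$ depending on the curve, and the $\varepsilon$'s cancel in its final computation --- but since your derivation of (\ref{kap12-lor}) never uses this claim, the proof stands.
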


\begin{proof}
At first, using (\ref{LWN}), we claim that 
\begin{align*}
  \varOmega(\mathbf L, \mathbf N, \mathbf W) 
	= \varOmega\left(\alpha',  \nabla_{\alpha'}\alpha', \nabla^2_{\alpha'}\alpha'\right). 
\end{align*}
On the other hand, it is a straightforward verification that $\varOmega(\mathbf L, \mathbf N, \mathbf W) = \pm1$. Consequently the frames $(\mathbf L, \mathbf N, \mathbf W)$ and $\left(\alpha', \nabla_{\alpha'}\alpha', \nabla^2_{\alpha'}\alpha'\right)$ have the same orientation and the pseudo-arc length parametrization of $\alpha$ is equi-affine arc length ($\varphi = 1$) since 
\begin{align*}
  \varOmega\left(\alpha', \nabla_{\alpha'}\alpha', 
	          \nabla^2_{\alpha'}\alpha'\right) = \varepsilon.
\end{align*}

The relations (\ref{e123-lor}) follow from (\ref{defCartfr}) by virtue of (\ref{LWN}) and $\varphi = 1$. Next, having (\ref{e123-lor}) and (\ref{Fren-null}), we compute 
\begin{align}
\label{nabe3}
  \nabla_{\alpha'}\mathbf e_3 
	= \null- \tau' \mathbf L - \tau \nabla_{\alpha'}\mathbf L - \nabla_{\alpha'}\mathbf N 
	= \null- \tau' \mathbf L - 2 \tau \mathbf W.
\end{align}
Now, the formulas (\ref{kap12-lor}) follow from (\ref{kp1}) and (\ref{kp2}) when using (\ref{e123-lor}) and (\ref{nabe3}) and knowing that $\varOmega(\mathbf L, \mathbf N, \mathbf W) = \varepsilon$ and $\varphi=1$. Precisely, we have 
\begin{align*}
  \varkappa_1 
	&= \varepsilon \varphi \varOmega(\nabla_{\alpha'}\mathbf e_3, \mathbf e_2, \mathbf e_3) 
	 = \varepsilon \varOmega(\null- \tau' \mathbf L - 2 \tau \mathbf W, \mathbf W, 
	                         \null- \tau \mathbf L - \mathbf N) \\[+3pt]
  &= - \varepsilon \tau' \varOmega(\mathbf L, \mathbf N, \mathbf W) 
	 = - \tau', \\[+3pt]
	\varkappa_2 
	&= \varepsilon \varphi \varOmega(\mathbf e_1, \nabla_{\alpha'}\mathbf e_3, \mathbf e_3) 
	 = \varepsilon \varOmega(\mathbf L, \null- \tau' \mathbf L - 2 \tau \mathbf W, 
	                             \null- \tau \mathbf L - \mathbf N) \\[+3pt]
	&= - 2 \varepsilon \tau \varOmega(\mathbf L, \mathbf N, \mathbf W) 
	 = - 2\tau.
\end{align*}
\end{proof}

We finish with the remark establishing a connection between equi-affine curvatures and the Schwarzian derivative of a certain function for null curves in the Minkowski spacetime $\mathbb E^3_1$. We describe it in the following way.

\begin{remark}
It is proved by one of the authors in \cite{Ol} (cf. also \cite{NP}) that any null and non-degenerate curve $\alpha$ in the 3-dimensional Minkowski spacetime $\mathbb E^3_1$ can be pseudo-arc length parametrized in the following way
\begin{align*}
  \alpha(t) = \alpha(t_0)
     + \frac{\varepsilon}{2} \int_{t_0}^t \frac{1}{f'(u)}
            \left(2f(u),f^2(u) - 1,f^2(u) + 1\right)\,dt,\ t,t_0\in I,
\end{align*}
where $f$ is a non-zero function with non-zero derivative $f'$ on $I$. Then the pseudo-torsion $\tau$ of $\alpha$ equals the Schwarzian derivative $S(f)$ of the function $f$, that is,
\begin{align*}
  \tau = S(f) = \left(\frac{f''}{f'}\right)^{\!\prime}
           - \frac12 \left(\frac{f''}{f'}\right)^2.
\end{align*}
Consequently, by Theorem \ref{theo3}, the equi-affine curvatures of $\alpha$ are
\begin{align*}
  \varkappa_1 = - (S(f))', \quad 
	\varkappa_2 = -2 S(f).
\end{align*}
\end{remark}



\begin{thebibliography}{99}

\bibitem{Bar}
W. Barthel and A. Irmingard,
\emph{Zur affinen Kurventheorie auf Mannigfaltigkeinten. I: Fundamentals\"atze},
Boll. Un. Mat. Ital. IV Ser. {\bf12} (1975), No. 3, 451-480.

\bibitem{Bla}
W. Blaschke,
\emph{Vorlesungen \"uber Differentzialgeometrie und geometrische Grundlagen von Einsteins Relativit\"atstheorie, II. Affine Differentzialgeometrie},
Springer-Verlag OHG, Berlin, 1923.

\bibitem{Dav}
D. Davis,
\emph{Generic affine differential geometry of curves in ${\mathbb R}^n$},
Proc. Royal Soci. Edinburgh, Sec. A, Math. {\bf136} (2006), 1195-1205.

\bibitem{Dug}
K. L. Duggal, 
\emph{A report on canonical null curves and screen distributions for lightlike geometry},
Acta Appl. Math. {\bf95} (2007), 135-149. 

\bibitem{DJ}
K. L. Duggal and D. H. Jin, 
\emph{Null curves and hypersurfaces of semi-Riemannian manifolds},
Hackensack, NJ: World Scientific (2007).

\bibitem{FT}
M. Faghfouri and M. Toomanian,
\emph{Equi-affine vector fields on manifold with equi-affine structure},
Int. J. Contemp. Math. Sci. {\bf2} (2007), No. 16, 793-801.

\bibitem{Fav}
J. Favard, 
\emph{Cours de g\'eom\'etrie diff\'erentielle locale},
Gauthier-Villars, Paris, 1957.

\bibitem{FGL}
A. Ferr\'andez, A. Gim\'enez and P. Lucas,
\emph{Null helices in Lorentzian space forms},
Int. J. Modern Phys. A {\bf16} (2001), 4845--4863.  

\bibitem{Gug}
H. W. Guggenheimer,
\emph{Differential  geometry},
Dover Publications Inc., New York, 1977.

\bibitem{Hla}
V. Hlavat\'y,
\emph{Propriet\`a differenziali delle curve in uno spazio a connessione lineare generale},
Rend. Circ. Mat. Palermo {\bf 53} (1929), No. 1, 365-388. 

\bibitem{IS}
S. Izumiya and T. Sano,
\emph{Generic affine differential geometry of space curves},
Proc. Royal Soc. Edinburgh {\bf128A} (1998), 301-314.

\bibitem{KP}
D. Khadjiev and \"O. Peksen, 
\emph{The complet system of global integral and differential invariants for equi-affine curves}, 
Diff. Geom. Appl. {\bf20} (2004), 167-175.

\bibitem{KS}
S. Kobayashi and T. Sasaki,
{\it General-affine invariants of plane curves and space curves}, 
Czechoslovak Math. J. (2019), 38 pp., in print.

\bibitem{NMS-1}
M. Nadjafikhah and A. Mahdipour-Shirayeh,
\emph{Affine classification of $n$-curves},
Balkan J. Geom. Appl. {\bf13}, (2008), 66-73.

\bibitem{NMS-2}
M. Nadjafikhah and A. Mahdipour-Shirayeh,
\emph{Classification of curves in affine geometry},
Diff. Geom. - Dyn. Syst. {\bf13} (2011), 191-200.

\bibitem{NP}
B. Nolasco and R. Pacheco,
\emph{Evolutes of plane curves and null curves in Minkowski 3-space},
J. Geom. {\bf108} (2016), No. 1, 195-214.

\bibitem{NS}
K. Nomizu and T. Sasaki,
\emph{Affine differential geometry},
Cambridge University Press, Cambridge, 1994. 

\bibitem{Ol}
Z. Olszak, 
\emph{A note about the torsion of null curves in the 3-dimensional Minkowski spacetime and the Schwarzian derivative}, 
Filomat {\bf29} (2015), No. 3, 553-561.

\bibitem{OOl}
K. Olszak and Z. Olszak, 
\emph{The equi-affine and Frenet curvatures of curves in pseudo-Riemannian 2-manifolds},
Coll. Math. {\bf150} (2017), No. 1, 103-112.

\bibitem{San}
L. A. Santal\'o,
\emph {A geometrical characterization for the affine differential invariants of a space curve},
Bull. Amer. Math. Soc. {\bf52} (1946), 625-532.

\bibitem{Sta}
P. Stavre, 
\emph{Asupra formulelor lui Frenet pentru curbe din spatiu cu conexiune afina $L_n$},
Studii Cerc. Mat. {\bf16} (1964), No. 10, 1257-1263.

\bibitem{Su}
B. Su,
\emph{Affine differential geometry},
Science Press, Beijing, China (1983). 
Gordon and Breach, Science Publishers, Inc., New York.

\end{thebibliography}
\end{document}